\numberwithin{equation}{section}
\newtheorem{theorem}{Theorem}
\newtheorem{lemma}[theorem]{Lemma}
\theoremstyle{remark}
\def\ga{\gamma}
\def\fl#1{\left\lfloor#1\right\rfloor}
\begin{document}
\title[Congruences for the number of unique path
  partitions]{Congruences modulo powers of $2$ for the number of unique path
  partitions}
\author[C. Krattenthaler]{C. Krattenthaler} 

\address{$^{\dagger*}$Fakult\"at f\"ur Mathematik, Universit\"at Wien,
Oskar-Morgenstern-Platz~1, A-1090 Vienna, Austria.
WWW: {\tt http://www.mat.univie.ac.at/\lower0.5ex\hbox{\~{}}kratt}.}

\address{$^*$School of Mathematical Sciences, Queen Mary
\& Westfield College, University of London,
Mile End Road, London E1 4NS, United Kingdom.
}

\thanks{$^\dagger$Research partially supported by the Austrian
Science Foundation FWF, grants Z130-N13 and S50-N15,
the latter in the framework of the Special Research Program
``Algorithmic and Enumerative Combinatorics"}

\subjclass[2010]{Primary 05A15;
Secondary 05A17 11A07 11P83}

\keywords{Unique path partitions, congruences, $q$-series}

\begin{abstract}
We compute the congruence class modulo 16 of the number of
unique path partitions of~$n$ (as defined by Olsson), thus
generalising previous results by Bessenrodt, Olsson and
Sellers [{\it Ann.\ Combin.} {\bf 13} (2013), 591--602].
\end{abstract}
\maketitle

\section{Introduction}

{\it Unique path partitions} were introduced by Olsson in \cite{OlssAA}.
Their study is motivated from the Murnaghan--Nakayama rule for the
calculation of the value of characters of the symmetric group.
They were completely characterised by Bessenrodt, Olsson and
Sellers in \cite{BeOSAA}. They used this characterisation to derive
a formula for the generating function for the number $u(n)$ of all
unique path partitions of~$n$. This formula reads
(cf.\ \cite[Remark~3.6]{BeOSAA})
\begin{align} \notag
\sum_{n\ge1}u(n)q^n&=2\sum_{i\ge1}q^{2^i-1}(1+q^{2^{i-1}})
\prod _{j=0} ^{i-2}\frac {1} {1-q^{2^j}}\\
&=
2\left(
{q(1+q)} 
+\sum_{i\ge2}\frac {q^{-1}+1} {1-q^2}\cdot
\frac {q^{2^i}(1+q^{2^{i-1}})} {\prod_{j=1}^{i-2}(1-q^{2^{j}})}
\right).
\label{eq:UGF}
\end{align}
The final part in \cite{BeOSAA} concerns congruences modulo~8 for $u(n)$.
The corresponding 
main result \cite[Theorem~4.6]{BeOSAA} provides a complete description
of the behaviour of $u(n)$ modulo~8 (in terms of the related sequence
of numbers $w(n)$; see the next section for the definition of~$w(n)$). 
The arguments to arrive at this result are mainly of a recursive nature.

The purpose of this note is to show that a more convenient and more
powerful method to derive congruences (modulo powers of~$2$ is by an analysis
of the generating function \eqref{eq:UGF}. Not only are we able to
recover the result from \cite{BeOSAA}, but in addition
we succeed in determining the congruence class of $u(n)$ modulo~16,
see \eqref{eq:wu} and Theorem~\ref{thm:w}, thus solving the problem
left open in the last paragraph of \cite{BeOSAA}.
We point out that the approach presented here is very much
inspired by calculations in \cite[Appendix]{KrMuAE}, where expressions
similar to the one on the right-hand side of \eqref{eq:UGF} appear,
with the role of the prime number~2 replaced by~3, though.

\section{An equivalent expression for the generating function}

We start with the observation (already made in \cite{BeOSAA}) that,
first, all numbers $u(n)$ are divisible by~$2$, and, second, we have
$u(2n)=u(2n-1)$ for all $n$. This is easy to see from the right-hand
side of \eqref{eq:UGF} since it has the form $2(1+q)f(q^2)$, where
$f(t)$ is a formal power series in~$t$.
We therefore divide the right-hand side of \eqref{eq:UGF} by $2(1+q^{-1})$,
subsequently replace $q$ by $q^{1/2}$,
and consider the ``reduced" generating function
\begin{align*} 
\notag
\sum_{n\ge2}w(n)q^n&=\sum_{i\ge2}q^{2^{i-1}}(1+q^{2^{i-2}})
\frac {1} {(1-q)\prod _{j=0} ^{i-3}(1-q^{2^j})}.
\end{align*}
In other words, we have 
\begin{equation} \label{eq:wu} 
2w(n)=u(2n)=u(2n-1)
\end{equation}
for all~$n$.

Using the convention
\begin{equation} \label{eq:SUM} 
\sum _{k=M} ^{N-1}\text {\rm Expr}(k)=\begin{cases} 
\hphantom{-}
\sum _{k=M} ^{N-1} \text {\rm Expr}(k),&N>M,\\
\hphantom{-}0,&N=M,\\
-\sum _{k=N} ^{M-1}\text {\rm Expr}(k),&N<M.\end{cases}
\end{equation}
for sums, we rewrite the above equation in the following way:
\begin{align} 
\notag
\sum_{n\ge2}w(n)q^n&=\sum_{i\ge2}q^{2^{i-1}}
\frac {1+q^{2^{i-2}}} {(1-2q+q^2)\prod _{j=1} ^{i-3}(1-q^{2^j})}\\
\notag
&=\sum_{i\ge2}q^{2^{i-1}}
\frac {1+q^{2^{i-2}}} 
{(1-\frac {2q} {1+q^2})(1+q^2)\prod _{j=1} ^{i-3}(1-q^{2^j})}\\
\notag
&=\sum_{i\ge2}q^{2^{i-1}}
\frac {1+q^{2^{i-2}}} 
{(1-\frac {2q} {1+q^2})(1-q^4)\prod _{j=2} ^{i-3}(1-q^{2^j})}\\
\notag
&=\sum_{i\ge2}q^{2^{i-1}}
\frac {1+q^{2^{i-2}}} 
{(1-\frac {2q} {1+q^2})(1-q^4)^2\prod _{j=3} ^{i-3}(1-q^{2^j})}\\
\notag
&\hphantom{{}={}}\hbox to 6cm{\leaders\hbox to .3cm{\hss.\hss}\hfill}\\
\notag
&=\sum_{i\ge1}q^{2^{2i-1}}
\frac {1+q^{2^{2i-2}}} 
{(1-\frac {2q} {1+q^2})(1-\frac {2q^4} {1+q^8})\cdots(1-\frac {2q^{2^{2i-4}}}
  {1+q^{2^{2i-3}}})
(1-q^{2^{2i-2}})}\\
\notag
&\kern1cm
+\sum_{i\ge1}q^{2^{2i}}
\frac {1} 
{(1-\frac {2q} {1+q^2})(1-\frac {2q^4} {1+q^8})\cdots(1-\frac {2q^{2^{2i-2}}}
  {1+q^{2^{2i-1}}})
}\\
&=\sum_{i\ge1}q^{2^{2i-1}}
\frac {1+\frac {2q^{2^{2i-2}}} {1-q^{2^{2i-2}}}} 
{\prod _{j=0} ^{i-2}(1-\frac {2q^{2^{2j}}}
  {1+q^{2^{2j+1}}})
}
+\sum_{i\ge1}q^{2^{2i}}
\frac {1} 
{\prod _{j=0} ^{i-1}(1-\frac {2q^{2^{2j}}}
  {1+q^{2^{2j+1}}})}
.
\label{eq:WGF2}
\end{align}

From the last expression it is immediately obvious that $w(n)$ is odd
if and only if $n$ is a power of~$2$, thus recovering the first
assertion of \cite[Cor.~4.3]{BeOSAA}. The above mentioned mod-4 result 
\cite[Theorem~4.6]{BeOSAA} for $w(n)$ --- which, by \eqref{eq:wu}, 
translates into a mod-8 result for~$u(n)$ --- 
can also be derived within a few lines from the above expression.

In the next section, we show how to obtain congruences modulo~$8$
for~$w(n)$, which, by \eqref{eq:wu}, translate into congruences
modulo~16 for the unique path partition numbers~$u(n)$.

\section{Congruences modulo powers of $2$}

In what follows, we write 
$$f(q)=g(q)~\text {modulo}~2^\ga$$ 
to mean that the coefficients
of $q^i$ in $f(q)$ and $g(q)$ agree modulo~$2^\ga$ for all $i$.
We apply geometric series expansion in \eqref{eq:WGF2}, and at the
same time we neglect terms which are divisible by~$8$. For example,
we expand
$$
\frac 1 {1-\frac {2q} {1+q^2}}=1+\frac {2q} {1+q^2}+
\frac {4q^2} {(1+q^2)^2}\quad 
\text{modulo }8.
$$
In this manner, we obtain the congruence
\begin{align} 
\notag
\sum_{n\ge2}w(n)q^n&=
\sum_{i\ge1}
q^{2^{2i-1}}\Bigg(
1+\frac {2q^{2^{2i-2}}} {1-q^{2^{2i-2}}}
+2\sum_{j=0}^{i-2}\frac {q^{2^{2j}}}
  {1+q^{2^{2j+1}}}\\
\notag
&\kern1.5cm
+4\frac {q^{2^{2i-2}}} {1-q^{2^{2i-2}}}
\sum_{j=0}^{i-2}\frac {q^{2^{2j}}}
  {1+q^{2^{2j+1}}}
+4\sum_{0\le s\le t\le i-2}^{}\frac {q^{2^{2s}+2^{2t}}}
  {(1+q^{2^{2s+1}})(1+q^{2^{2t+1}})}
\Bigg)\\
\notag
&\kern.5cm
+\sum_{i\ge1}q^{2^{2i}}
\Bigg(1
+2\sum_{j=0}^{i-1}\frac {q^{2^{2j}}}
  {1+q^{2^{2j+1}}}
+4\sum_{0\le s\le t\le i-1}\frac {q^{2^{2s}+2^{2t}}}
  {(1+q^{2^{2s+1}})(1+q^{2^{2t+1}})}
\Bigg)\\
\notag
&\kern11.5cm
\text{modulo }8.
\end{align}
After rearrangement, this becomes
\begin{align} 
\notag
\sum_{n\ge2}w(n)q^n&=
\sum_{i\ge1}q^{2^i}
+\frac {2q^3} {1-q}
+2\sum_{j\ge1}\frac {1}
  {1-q^{2^{2j}}}\Bigg(
q^{2^{2j}+2^{2j+1}}
+q^{2^{2j-2}}(1-q^{2^{2j-1}})\sum_{\ell\ge 2j}q^{2^\ell}
\Bigg)\\
\notag
&\kern1cm
+4\sum_{1\le s<t}\frac {q^{2^{2s-2}+2^{2t-2}}}
  {(1-q^{2^{2s-1}})(1-q^{2^{2t-1}})}
\Bigg(
q^{2^{2t-1}}(1+q^{2^{2t-2}})
+\sum_{\ell\ge2t}q^{2^\ell}
\Bigg)\\
&\kern1cm
+4\sum_{s\ge1}\frac {q^{2^{2s-1}}}
  {(1-q^{2^{2s}})}
\sum_{\ell\ge2s}q^{2^\ell}
\quad \quad \quad \quad 
\text{modulo }8.
\label{eq:Wcong1}
\end{align}

We must now analyse the individual sums in \eqref{eq:Wcong1}.

\begin{lemma} \label{lem:S1}
Let $n\ge2$, and write 
$n=\sum_{i=a}^e n_i\cdot 2^i$, with $0\le n_i\le1$ for all~$i$
and $n_a\ne0\ne n_e$.
Then the coefficient of $q^n$ in 
\begin{equation} \label{eq:S1} 
\sum_{j\ge1}\frac {q^{2^{2j}+2^{2j+1}}}
  {1-q^{2^{2j}}}
\end{equation}
is equal to $\fl{a/2}$ if $n$ is not a power of\/ $2$, and it is
equal to $\max\{\fl{a/2}-1,0\}$ otherwise.
\end{lemma}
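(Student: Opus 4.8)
The plan is to expand the geometric series in \eqref{eq:S1} term by term and read off the coefficient of $q^n$ directly. Since $2^{2j}+2^{2j+1}=3\cdot2^{2j}$, one has
$$\frac{q^{2^{2j}+2^{2j+1}}}{1-q^{2^{2j}}}=\sum_{m\ge3}q^{m\cdot2^{2j}},$$
so the coefficient of $q^n$ in the $j$-th summand equals $1$ if $2^{2j}\mid n$ and $n/2^{2j}\ge3$, and $0$ otherwise. Consequently, the coefficient of $q^n$ in \eqref{eq:S1} is the number of integers $j\ge1$ for which both $2^{2j}\mid n$ and $n\ge3\cdot2^{2j}$ hold, and the whole proof reduces to counting these $j$.

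Next I would translate the divisibility condition into the binary data of $n$. With $n=\sum_{i=a}^e n_i\cdot2^i$ and $n_a\ne0$, the integer $a$ is the $2$-adic valuation of $n$, so $2^{2j}\mid n$ if and only if $2j\le a$, i.e.\ $1\le j\le\fl{a/2}$. It then remains to determine for which such $j$ the inequality $n/2^{2j}\ge3$ can fail. If $2^{2j}\mid n$, then $n/2^{2j}$ is a positive integer, and $n/2^{2j}\in\{1,2\}$ forces $n=2^{2j}$ or $n=2^{2j+1}$; in either case $n$ is a power of $2$. This is the observation that splits the statement into its two cases.

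Finally I would carry out the two cases. If $n$ is not a power of $2$, the previous remark shows that $n/2^{2j}\ge3$ holds automatically whenever $2^{2j}\mid n$, so the count is exactly $\#\{j:1\le j\le\fl{a/2}\}=\fl{a/2}$. If $n=2^a$, then for $1\le j\le\fl{a/2}$ the quotient $n/2^{2j}=2^{a-2j}$ satisfies $n/2^{2j}\ge3$ precisely when $a-2j\ge2$, i.e.\ $j\le\fl{a/2}-1$; hence the count is $\#\{j:1\le j\le\fl{a/2}-1\}=\max\{\fl{a/2}-1,0\}$, the maximum with $0$ absorbing the small cases $a\le3$ in which no admissible $j$ exists. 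This yields the asserted values.

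Since every step is either an elementary geometric-series expansion or a bookkeeping argument about binary expansions, I do not foresee any real obstacle. The only point demanding a little care is isolating the exceptional quotients $n/2^{2j}\in\{1,2\}$ and recognising that they occur exactly when $n$ is a power of $2$ — this is precisely what forces the case distinction in the statement, and getting the small-$a$ edge cases right (hence the $\max$ with $0$) is where one should be most attentive.
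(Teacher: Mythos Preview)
Your argument is correct and follows essentially the same route as the paper: expand each term as a geometric series, interpret the coefficient of $q^n$ as the number of representations $n=(k+3)\cdot 2^{2j}$ with $j\ge1$ and $k\ge0$, and then translate the conditions into the inequality $2j\le a$ together with the extra constraint in the power-of-$2$ case. The paper's proof is simply a terser version of what you wrote; your explicit handling of the quotients $n/2^{2j}\in\{1,2\}$ and of the edge case requiring the $\max$ with~$0$ is exactly the detail the paper leaves implicit.
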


\begin{proof}
By geometric series expansion, we see that the coefficient of $q^n$ in 
\eqref{eq:S1} is equal to the number of possibilities to write
$n=(k+3)2^{2j}$ for some $j\ge1$ and $k\ge0$. For fixed $j$, we can
find a suitable $k$ if and only if $n\ge 3\cdot 2^{2j}$.
If $n$ is not a power of $2$,
this is equivalent to the condition that $2j\le a$.
The claim follows immediately.
\end{proof}

\begin{lemma} \label{lem:S2}
Let $n\ge2$, and write 
$n=\sum_{i=a}^e n_i\cdot 2^i$ as in Lemma~\ref{lem:S1}.
Then the coefficient of $q^n$ in 
\begin{equation} \label{eq:S2} 
\sum_{j\ge1}
\frac {q^{2^{2j-2}}}
  {1-q^{2^{2j}}}
\sum_{\ell\ge 2j}q^{2^\ell}
\end{equation}
is equal to $e-2j+1$ if $a=2j-2$, $n_{a+1}=n_{2j-1}=0$, 
and $n$ is not a power of $2$, 
and it is equal to $0$ otherwise.
\end{lemma}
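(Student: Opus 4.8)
The plan is to follow the pattern of the proof of Lemma~\ref{lem:S1}. Expanding $\frac{1}{1-q^{2^{2j}}}=\sum_{k\ge0}q^{k2^{2j}}$ in \eqref{eq:S2}, the coefficient of $q^n$ there equals the number of triples $(j,k,\ell)$ with $j\ge1$, $k\ge0$, $\ell\ge 2j$ and
$$n=2^{2j-2}(1+4k)+2^\ell.$$
For a fixed pair $(j,\ell)$ a suitable $k$ exists if and only if $n-2^\ell$ is a positive integer whose $2$-adic valuation $v_2(n-2^\ell)$ equals $2j-2$ and whose odd part is congruent to $1$ modulo $4$; the latter, in terms of the binary expansion of $n-2^\ell$, means precisely that the digit in position $v_2(n-2^\ell)+1$ is $0$. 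Since for a given $\ell$ at most one $j$ can occur --- namely the one with $2j=v_2(n-2^\ell)+2$, which is an admissible integer exactly when $v_2(n-2^\ell)$ is even --- the problem reduces to counting the $\ell$ for which $2^\ell<n$, $v_2(n-2^\ell)$ is even, $\ell\ge v_2(n-2^\ell)+2$, and the digit of $n-2^\ell$ in position $v_2(n-2^\ell)+1$ vanishes.

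I would then split according to where $\ell$ lies relative to $a$. If $\ell<a$ then $v_2(n-2^\ell)=\ell$, so the requirement $\ell\ge v_2(n-2^\ell)+2=\ell+2$ fails. If $\ell=a$ and $n$ is not a power of $2$, then $v_2(n-2^a)$ equals the smallest index $i>a$ with $n_i=1$, which exceeds $a=\ell$, so again the inequality fails; and if $n$ is a power of $2$ the term $\ell=a$ yields $n-2^\ell=0$. Hence only $\ell>a$ can contribute, and there, since subtracting $2^\ell$ leaves all binary digits of $n$ in positions below $\ell$ unchanged, $v_2(n-2^\ell)=a$. A contribution therefore forces $a$ to be even, say $a=2j-2$, whereupon the digit condition concerns position $a+1=2j-1$.

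The delicate point --- the step I expect to be the main obstacle --- is to combine this digit condition with the inequality $\ell\ge 2j=a+2$. For $\ell=a+1$ the inequality already fails, so that value contributes nothing; for $\ell\ge a+2$, subtracting $2^\ell$ does not disturb the digit of $n$ in position $a+1$, so the digit of $n-2^\ell$ there is exactly $n_{a+1}$, and the digit condition becomes $n_{a+1}=0$. Assembling everything: if $a$ is odd, or $n_{a+1}=1$, or $n$ is a power of $2$, the coefficient is $0$; otherwise it equals the number of $\ell$ with $a+2\le\ell$ and $2^\ell<n$. Finally, the assumptions that $n$ is not a power of $2$ and $n_{a+1}=0$ force $e\ge a+2$, and then $2^\ell<n$ holds exactly for $\ell\le e$; hence the count is $e-(a+2)+1=e-2j+1$, which is the claimed value (and $n_{a+1}=n_{2j-1}$ because $a=2j-2$). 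This completes the proof.
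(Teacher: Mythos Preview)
Your proof is correct and follows the same approach as the paper: both expand the geometric series to reduce the question to counting representations $n=2^{2j-2}+k\cdot 2^{2j}+2^\ell$ with $j\ge1$, $k\ge0$, $\ell\ge 2j$. The paper's proof stops at ``the claim follows immediately,'' whereas you have carefully spelled out the case analysis on $\ell$ relative to $a$ that justifies this, so your argument is simply a more detailed version of the same proof.
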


\begin{proof}
By geometric series expansion, we see that the coefficient of $q^n$ in 
\eqref{eq:S2} is equal to the number of possibilities to write
$n=2^{2j-2}+k\cdot 2^{2j}+2^\ell$ for some $j\ge1$, $\ell\ge2j$, and $k\ge0$. 
The claim follows immediately.
\end{proof}

\begin{lemma} \label{lem:S3}
Let $n\ge2$, and write 
$n=\sum_{i=a}^e n_i\cdot 2^i$ as in Lemma~\ref{lem:S1}.
Then the coefficient of $q^n$ in 
\begin{equation} \label{eq:S3} 
\sum_{j\ge1}
\frac {q^{2^{2j-2}+2^{2j-1}}}
  {1-q^{2^{2j}}}
\sum_{\ell\ge 2j}q^{2^\ell}
\end{equation}
is equal to $e-2j+1$ if $a=2j-2$, $n_{a+1}=n_{2j-1}=1$, 
and it is equal to $0$ otherwise.
\end{lemma}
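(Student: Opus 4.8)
The plan is to mimic the proofs of Lemmas~\ref{lem:S1} and~\ref{lem:S2} almost verbatim, since the summand in \eqref{eq:S3} differs from that in \eqref{eq:S2} only by the extra factor $q^{2^{2j-1}}$ in the numerator. First I would apply geometric series expansion to $1/(1-q^{2^{2j}})$, so that the coefficient of $q^n$ in \eqref{eq:S3} counts the number of ways to write
$$n=2^{2j-2}+2^{2j-1}+k\cdot 2^{2j}+2^\ell$$
for some $j\ge1$, $\ell\ge2j$, and $k\ge0$. The next step is to read off what this representation says about the binary digits $n_i$ of $n$. The term $2^{2j-2}$ forces the lowest set bit to be in position $2j-2$, i.e.\ $a=2j-2$ (note $2^{2j-2}$ cannot be absorbed by any of $k\cdot 2^{2j}$ or $2^\ell$, as those lie in positions $\ge 2j$); the term $2^{2j-1}$ then forces $n_{2j-1}=n_{a+1}=1$. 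Conversely, once $a=2j-2$ (which pins down $j$ uniquely) and $n_{a+1}=1$, the remaining part $n-2^{2j-2}-2^{2j-1}$ must be written as $k\cdot 2^{2j}+2^\ell$ with $\ell\ge2j$, $k\ge0$.

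The only genuine point requiring care — and the one place where this lemma differs in flavour from Lemma~\ref{lem:S2}, where the analogous count was $e-2j+1$ — is counting the choices of $\ell$. Writing $m=n-2^{2j-2}-2^{2j-1}$, I would observe that $m$ is supported on positions $\ge 2j$, and for each choice of $\ell$ with $2j\le\ell\le e$ such that subtracting $2^\ell$ from $m$ still leaves a nonnegative integer expressible as $k\cdot 2^{2j}$, we get exactly one valid $k$; since $k$ only needs to be a nonnegative integer (not a digit), the constraint is merely $2^\ell\le m$, equivalently $\ell\le e$ (here $e$ is the position of the top bit of $n$, which is unaffected by the subtraction provided $e\ge 2j$, automatically true since $a=2j-2<e$ once $n$ is not a power of $2$; if $n$ were a power of $2$ we would have $a=e$, contradicting $n_{a+1}=1$). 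Hence $\ell$ ranges over $\{2j,2j+1,\dots,e\}$, giving $e-2j+1$ possibilities, and the special ``power of $2$'' case simply cannot occur here because of the $n_{a+1}=1$ hypothesis — which is why, unlike in Lemma~\ref{lem:S1}, there is no $\max\{\cdot,0\}$ correction.

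I do not anticipate a serious obstacle: the argument is a routine bijective bookkeeping of binary representations, structurally parallel to the two preceding lemmas. The only thing to double-check is that the extra bit $2^{2j-1}$ in the numerator does not interact with the running index $k$ — it does not, since $2^{2j-2}+2^{2j-1}<2^{2j}$, so the ``carry'' into position $2j$ and above is entirely governed by $k$ and $2^\ell$, exactly as before. Thus the proof is essentially: \emph{``By geometric series expansion, the coefficient of $q^n$ in \eqref{eq:S3} equals the number of ways to write $n=2^{2j-2}+2^{2j-1}+k\cdot2^{2j}+2^\ell$ with $j\ge1$, $\ell\ge2j$, $k\ge0$; inspection of binary digits shows this requires $a=2j-2$ and $n_{a+1}=n_{2j-1}=1$, in which case $\ell$ may be any of $2j,\dots,e$, giving $e-2j+1$, and otherwise the count is $0$.''}
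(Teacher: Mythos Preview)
Your proposal is correct and follows exactly the paper's approach: expand $1/(1-q^{2^{2j}})$ as a geometric series, interpret the coefficient of $q^n$ as the number of representations $n=2^{2j-2}+2^{2j-1}+k\cdot 2^{2j}+2^\ell$, and read off the constraints on the binary digits. The paper's own proof is in fact just your one-sentence summary at the end (``The claim follows immediately''); your additional discussion of the range of $\ell$ merely spells out what the paper leaves implicit, with the minor caveat that $a<e$ only yields $e\ge 2j-1$, not $e\ge 2j$ --- but in the boundary case $e=2j-1$ one has $m=0$ and the formula $e-2j+1=0$ still holds.
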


\begin{proof}
By geometric series expansion, we see that the coefficient of $q^n$ in 
\eqref{eq:S3} is equal to the number of possibilities to write
$n=2^{2j-2}+2^{2j-1}+k\cdot 2^{2j}+2^\ell$ for some $j\ge1$, 
$\ell\ge2j$, and $k\ge0$. 
The claim follows immediately.
\end{proof}

\begin{lemma} \label{lem:S5}
Let $n\ge2$, and write 
$n=\sum_{i=a}^e n_i\cdot 2^i$ as in Lemma~\ref{lem:S1}.
Then the coefficient of $q^n$ in 
\begin{equation} \label{eq:S5} 
\sum_{s\ge1}
\frac {q^{2^{2s-1}}}
  {1-q^{2^{2s}}}
\sum_{\ell\ge 2s}q^{2^\ell}
\end{equation}
is equal to $e-2s+1$ if $a=2s-1$
and $n$ is not a power of $2$, 
and it is equal to $0$ otherwise.
\end{lemma}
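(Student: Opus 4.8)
The plan is to argue exactly as in the proofs of Lemmas~\ref{lem:S2} and~\ref{lem:S3}. First I would expand the geometric series $\frac{1}{1-q^{2^{2s}}}=\sum_{k\ge0}q^{k2^{2s}}$ in \eqref{eq:S5}, so that the coefficient of $q^n$ in \eqref{eq:S5} becomes the number of triples $(s,k,\ell)$ with $s\ge1$, $\ell\ge2s$, $k\ge0$, and
\[
n=2^{2s-1}+k\cdot2^{2s}+2^\ell .
\]

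Next I would read off the conditions this equation places on the binary expansion of~$n$. Since $k\cdot2^{2s}$ and $2^\ell$ (recall $\ell\ge2s$) are each divisible by $2^{2s}$, the equation forces $n\equiv2^{2s-1}\pmod{2^{2s}}$, i.e.\ $n_0=\cdots=n_{2s-2}=0$ while $n_{2s-1}=1$. In other words $a=2s-1$, so there is no contribution at all unless $a$ is odd, in which case $s=(a+1)/2$ is the unique admissible value of the summation index. This is where the real content lies, and it is entirely parallel to the corresponding step in the two lemmas just quoted.

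Finally, assuming $a=2s-1$, set $m:=n-2^{2s-1}$. If $n$ is a power of~$2$ then $n=2^{2s-1}$, hence $m=0$, and the equation $m=k\cdot2^{2s}+2^\ell$ has no solution, so the coefficient is~$0$ --- this is the ``otherwise'' alternative. If $n$ is not a power of~$2$, then $a<e$, so $m=\sum_{i\ge2s}n_i2^i$ keeps the top digit of~$n$ in position~$e$ and therefore satisfies $2^e\le m<2^{e+1}$; since $m$ is divisible by $2^{2s}$, the number $k=(m-2^\ell)/2^{2s}$ is a nonnegative integer precisely for $2s\le\ell\le e$, which yields exactly $e-2s+1$ admissible triples. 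I do not expect any genuine obstacle here: the only thing needing care is the routine bookkeeping that determines, for a fixed~$n$, which exponents $\ell$ give a legitimate (nonnegative, integral) value of~$k$, just as in the proofs of Lemmas~\ref{lem:S2} and~\ref{lem:S3}.
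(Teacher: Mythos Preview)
Your proposal is correct and follows exactly the same approach as the paper's own proof: expand the geometric series, interpret the coefficient as the number of representations $n=2^{2s-1}+k\cdot2^{2s}+2^\ell$, and read off the binary constraints. The paper compresses the last two paragraphs of your argument into the single phrase ``The claim follows immediately,'' so you have simply made explicit the bookkeeping that the paper leaves to the reader.
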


\begin{proof}
By geometric series expansion, we see that the coefficient of $q^n$ in 
\eqref{eq:S5} is equal to the number of possibilities to write
$n=2^{2s-1}+k\cdot 2^{2s}+2^\ell$ for some $s\ge1$, $\ell\ge2s$, and $k\ge0$. 
The claim follows immediately.
\end{proof}

\begin{lemma} \label{lem:S4}
Let $n\ge2$, and write 
$n=\sum_{i=a}^e n_i\cdot 2^i$ as in Lemma~\ref{lem:S1}.
Then the coefficient of $q^n$ in 
\begin{equation} \label{eq:S4} 
\sum_{1\le s<t}\frac {q^{2^{2s-2}+2^{2t-2}}}
  {(1-q^{2^{2s-1}})(1-q^{2^{2t-1}})}
\sum_{\ell\ge2t-1}q^{2^\ell}
\end{equation}
is congruent to 
\begin{equation} \label{eq:S4erg}
e\sum_{i=a+2}^{e-\chi(e\text{ \em even})}
n_{i}
-a\cdot n_{a+2}
+\fl{\tfrac {1} {2}(e-a-1)}
\quad \quad 
\text{\em (mod~2)}, 
\end{equation}
where $\chi(\mathcal S)=1$ if $\mathcal S$ is
true and $\chi(\mathcal S)=0$ otherwise.
\end{lemma}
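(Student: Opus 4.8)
The plan is to follow the template of the proofs of Lemmas~\ref{lem:S1}--\ref{lem:S5}, expanding all the geometric series, but then to push the resulting enumeration one step further, since here it does not reduce modulo~$2$ to a one-line observation. By geometric series expansion, the coefficient of $q^n$ in \eqref{eq:S4} equals the number of quintuples $(s,t,\ell,k_1,k_2)$ with $1\le s<t$, $\ell\ge 2t-1$, $k_1,k_2\ge0$ and $n=2^{2s-2}+k_12^{2s-1}+2^{2t-2}+k_22^{2t-1}+2^{\ell}$. First I would observe that, since $s<t$, the summand $2^{2s-2}$ is the unique one whose $2$-adic valuation equals $2s-2$, all the others having valuation at least $2s-1$; hence $2s-2=a$, so that $s$ is forced to be $\tfrac a2+1$. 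In particular the coefficient vanishes when $a$ is odd or $n$ is a power of~$2$, cases which I would dispose of by checking \eqref{eq:S4erg} separately. From here on I assume $a$ even and $n$ not a power of~$2$, put $s=\tfrac a2+1$, and set $m:=n-2^a=\sum_{i>a}n_i2^i$.

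With $s$ fixed, for each admissible triple $(t,\ell,k_2)$ the equation $k_12^{a+1}=m-2^{2t-2}(2k_2+1)-2^{\ell}$ has a unique solution, and $k_1$ is a nonnegative integer precisely when $2^{2t-2}(2k_2+1)+2^{\ell}\le m$ (all of $m$, $2^{2t-2}(2k_2+1)$, $2^{\ell}$ being divisible by $2^{a+1}$). Thus the coefficient equals $\sum_{t\ge a/2+2}C(t)$, where $C(t)$ counts the pairs $(\ell,k_2)$ with $\ell\ge 2t-1$, $k_2\ge0$ and $2^{2t-2}(2k_2+1)+2^{\ell}\le m$. Grouping these pairs by the common value $k_22^{2t-1}+2^{\ell}=j\,2^{2t-1}$ --- a multiple of $2^{2t-1}$ realised, for each integer $j\ge1$, in exactly $\fl{\log_2 j}+1$ ways --- I would rewrite $C(t)=\sum_{j=1}^{J(t)}\bigl(\fl{\log_2 j}+1\bigr)$, where $J(t)=\fl{(m-2^{2t-2})/2^{2t-1}}$ (and $C(t)=0$ once $J(t)\le0$).

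The remaining, and principal, task is to evaluate $\sum_tC(t)$ modulo~$2$. For this I would use the elementary congruence $\sum_{j=1}^{J}\bigl(\fl{\log_2 j}+1\bigr)\equiv\fl{\log_2 J}\,(J+1)+J\pmod2$ for $J\ge1$, together with two facts about the binary expansion of $n$: that $J(t)\equiv n_{2t-2}+n_{2t-1}+1\pmod2$, so that when $n_{2t-2}=n_{2t-1}$ one has $J(t)$ odd and hence $C(t)\equiv1$; and that when $n_{2t-2}\ne n_{2t-1}$ (and $t$ is not too large) one has $\fl{\log_2 J(t)}=e-2t+1$, whereas the largest admissible values of $t$ give $C(t)\equiv0$. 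Feeding this in, $\sum_tC(t)$ collapses modulo~$2$ to the sum, over all even $p$ with $a+2\le p\le e-1$, of $1$ if $n_p=n_{p+1}$ and of $(e+1)\bmod2$ if $n_p\ne n_{p+1}$. A final rearrangement --- distinguishing the parities of~$e$, using that $a$ even makes the term $-a\,n_{a+2}$ of \eqref{eq:S4erg} vanish modulo~$2$, and noting that for odd~$e$ the digit contributions amalgamate into $\sum_{i=a+2}^{e}n_i$ --- then identifies this sum with the right-hand side of \eqref{eq:S4erg}.

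I expect the bookkeeping in this last paragraph to be the main obstacle. Delimiting precisely which values of~$t$ are ``interior'' (so that $\fl{\log_2 J(t)}=e-2t+1$) versus ``boundary'' (where $C(t)\equiv0$), tracking the borrows that occur on passing from $m$ to $m-2^{\ell}$, and coping with the degenerate ranges of summation --- where the convention \eqref{eq:SUM} has to be invoked --- are just the points at which errors are easy to make. The earlier steps, namely the geometric series expansion and the forcing of~$s$, and then the regrouping that produces the weights $\fl{\log_2 j}+1$, should be routine.
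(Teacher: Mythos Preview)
Your approach is correct for the main case ($a$ even, $n$ not a power of~$2$), but it is organised differently from the paper's. Both arguments begin by forcing $2s-2=a$ and then counting the remaining data. The paper, however, fixes $(t,k_3)$ (writing $\ell=2t-1+k_3$) and counts admissible $k_2$ directly as $\bigl\lfloor n/2^{2t-1}+\tfrac12\bigr\rfloor-2^{k_3}$; the double sum over $t$ and $k_3$ then reduces modulo~$2$ in a few lines, using $\sum_{k_3}2^{k_3}\equiv1$ and reading the binary digits of~$n$ from the floor. Your regrouping by $j=k_2+2^{\ell-(2t-1)}$, together with the identity $\sum_{j\le J}(\lfloor\log_2 j\rfloor+1)\equiv\lfloor\log_2 J\rfloor(J+1)+J\pmod2$ and the parity computation $J(t)\equiv n_{2t-2}+n_{2t-1}+1$, is a genuinely different route; it is a little longer, but it lands on the same digit-sum and hence on \eqref{eq:S4erg}. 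The paper's path has the advantage that no $\log_2$ ever appears and no separate ``boundary $t$'' analysis is needed; yours has the advantage that the case split on $n_{2t-2},n_{2t-1}$ is more transparent.

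One caution. You propose to ``dispose of'' the cases $a$ odd and $n$ a power of~$2$ by checking that \eqref{eq:S4erg} vanishes modulo~$2$ there. It does not: for $n=18$ one has $a=1$, $e=4$, and \eqref{eq:S4erg} evaluates to~$1$, while the coefficient is~$0$. The paper's proof sidesteps this entirely --- it records that the coefficient vanishes unless $a$ is even and then works under $a=2s-2$, never revisiting the odd-$a$ case. The lemma is invoked in Theorem~\ref{thm:w} only through the prefactor $4\chi(a\text{ even})$ and only for $n$ not a power of~$2$, so the right way to handle those degenerate cases is to note that the formula is never used there, not to attempt a verification that would fail.
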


\begin{proof}
By geometric series expansion, we see that the coefficient of $q^n$ in 
\eqref{eq:S4} is equal to the number of possibilities to write
\begin{equation} \label{eq:k1k2a} 
n=(2k_1+1)2^{2s-2}+(2k_2+1)2^{2t-2}+2^{2t-1+k_3}
\end{equation}
for some $s$ and $t$ with
$1\le s<t$ and $k_1,k_2,k_3\ge0$.
Clearly, we need $a$ to be even in order that
the number of these possibilities be non-zero.
Given that $a=2s-2$,
we just have to count the number of possible triples $(t,k_2,k_3)$ in  
\eqref{eq:k1k2a}, since the appropriate $k_1$ can certainly be found.
If we fix $t$ and $k_3$, the number of possible $k_2$'s is
$$
\fl{\frac {1} {2}\cdot \frac {n-2^{2t-1+k_3}} {2^{2t-2}}+\frac {1} {2}}
=
\fl{\frac {n} {2^{2t-1}}+\frac {1} {2}}-2^{k_3}.
$$
This needs to be summed over all $t$ and $k_3$
with $\frac {1} {2}(a+2)=s<t\le \frac {1} {2}(e+1)$ and $0\le k_3\le e-2t+1$. 
We obtain
\begin{align*} 
\sum_{t=s+1}^{\fl{\frac {1} {2}(e+1)}}
&\sum_{k_3=0}^{e-2t+1}
\left(\fl{\frac {n} {2^{2t-1}}+\frac {1} {2}}-2^{k_3}\right)\\
&
\equiv
\sum_{t=\frac {1} {2}(a+4)}^{\fl{\frac {1} {2}(e+1)}}
\sum_{k_3=0}^{e-2t+1}
\big\lfloor
n_a\cdot 2^{a-2t+1}+\dots+(n_{2t-2}+1)\cdot 2^{-1}\\
&\kern4cm
+n_{2t-1}+n_{2t}\cdot 2+\dots +n_e\cdot 2^{e-2t+1}
\big\rfloor
-\fl{\tfrac {1} {2}(e-a-1)}\\
&
\equiv
\sum_{t=\frac {1} {2}(a+4)}^{\fl{\frac {1} {2}(e+1)}}
(e-2t+2)
(n_{2t-2}+n_{2t-1})
+\fl{\tfrac {1} {2}(e-a-1)}\\
&
\equiv
e\sum_{t=\frac {1} {2}(a+4)}^{\fl{\frac {1} {2}(e+1)}}
n_{2t-1}
+e\sum_{t=\frac {1} {2}(a+4)}^{\fl{\frac {1} {2}(e-1)}}
n_{2t}
+(e-a)n_{a+2}
+\fl{\tfrac {1} {2}(e-a-1)}
\quad \quad (\text{mod }2).
\end{align*}
\end{proof}

\begin{lemma} \label{lem:S5a}
Let $n\ge2$, and write 
$n=\sum_{i=a}^e n_i\cdot 2^i$ as in Lemma~\ref{lem:S1}.
Then the coefficient of $q^n$ in 
\begin{equation} \label{eq:S5a} 
\sum_{1\le s<t}\frac {q^{2^{2s-2}+2^{2t}}}
  {(1-q^{2^{2s-1}})(1-q^{2^{2t-1}})}
\end{equation}
is congruent to 
\begin{equation} \label{eq:S5aerg}
\sum_{t=\frac {1} {2}(a+4)}^{\fl{\frac {1} {2}(e+1)}}
n_{2t-1}
+\fl{\tfrac {1} {2}(e-a-1)}.
\quad \quad 
\text{\em (mod~2)}.
\end{equation}
\end{lemma}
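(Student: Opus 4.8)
The plan is to proceed exactly as in the proof of Lemma~\ref{lem:S4}. By geometric series expansion, the coefficient of $q^n$ in \eqref{eq:S5a} equals the number of quadruples $(s,t,k_1,k_2)$ with $1\le s<t$ and $k_1,k_2\ge0$ such that
$$
n=(2k_1+1)2^{2s-2}+(k_2+2)2^{2t-1},
$$
obtained by combining $2^{2s-2}+k_1\,2^{2s-1}=(2k_1+1)2^{2s-2}$ and $2^{2t}+k_2\,2^{2t-1}=(k_2+2)2^{2t-1}$. Since $2s-2<2t-1$, the $2$-adic valuation of the right-hand side is exactly $2s-2$, so the number in question is $0$ unless $a=2s-2$; in particular $a$ must be even and $s=\tfrac12(a+2)$, which I assume from now on.

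For a fixed $t>s$ I would write $n=M\cdot2^{2t-1}+r$ with $M=\fl{n/2^{2t-1}}$ and $2^a\le r<2^{2t-1}$, the lower bound holding because $n_a=1$ while $a<2t-1$. Given $k_2\ge0$, an admissible $k_1\ge0$ exists (and is then unique) if and only if $k_2+2\le M$, for in that case $n-(k_2+2)2^{2t-1}=(M-k_2-2)2^{2t-1}+r$ is positive and has $2$-adic valuation exactly $a$, hence is an odd multiple of $2^a$. Consequently the number of admissible $k_2$ for this $t$ is $\max\{M-1,\,0\}$, so that the coefficient of $q^n$ equals
$$
\sum_{t>s}\max\bigl\{\fl{n/2^{2t-1}}-1,\ 0\bigr\}.
$$

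Next I would observe that $\fl{n/2^{2t-1}}=\sum_{i\ge 2t-1}n_i\,2^{i-2t+1}$ is $\ge2$ exactly when $e\ge2t$, i.e.\ when $t\le\fl{e/2}$, and otherwise equals $n_{2t-1}\le1$; and that when it is $\ge2$ it is $\equiv n_{2t-1}\pmod 2$. Summing the contributions $\fl{n/2^{2t-1}}-1$ over $t=\tfrac12(a+4),\dots,\fl{e/2}$ — a range whose cardinality is $\fl{e/2}-\tfrac a2-1$ when it is nonempty, and which otherwise forces the coefficient to be $0$ — I find that the coefficient of $q^n$ is congruent modulo~$2$ to
$$
\sum_{t=\frac12(a+4)}^{\fl{e/2}}n_{2t-1}\ -\ \Bigl(\fl{e/2}-\tfrac a2-1\Bigr).
$$
Finally I would turn this into \eqref{eq:S5aerg} by a short parity check that splits on the parity of $e$. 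If $e$ is even, then $\fl{(e+1)/2}=\fl{e/2}$, so the two sums coincide, and $\fl{e/2}-\tfrac a2-1$ is literally equal to $\fl{\tfrac12(e-a-1)}$ since $e-a-1$ is odd. If $e$ is odd, then $\fl{(e+1)/2}=\fl{e/2}+1$, so the sum in \eqref{eq:S5aerg} gains the extra term $n_e=1$, while simultaneously $\fl{\tfrac12(e-a-1)}=\fl{e/2}-\tfrac a2$ exceeds $\fl{e/2}-\tfrac a2-1$ by $1$; as $a$ is even, these two changes compensate each other modulo~$2$. I expect this closing bookkeeping — matching $\fl{e/2}$ against $\fl{\tfrac12(e-a-1)}$ and absorbing the off-by-one in the summation range when $e$ is odd — to be the only delicate point; everything preceding it merely transcribes the argument already carried out for Lemma~\ref{lem:S4}.
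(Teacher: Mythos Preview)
Your proposal is correct and follows essentially the same route as the paper: expand by geometric series, pin down $s$ from the $2$-adic valuation of $n$ (forcing $a=2s-2$ even), count admissible pairs $(t,k_2)$ by $\fl{n/2^{2t-1}}-1$, and reduce modulo~$2$. The only cosmetic difference is that the paper sums directly over $t=\tfrac12(a+4),\dots,\fl{\tfrac12(e+1)}$ (the extra term at $2t-1=e$ contributing~$0$), whereas you truncate at $\fl{e/2}$ and reconcile with \eqref{eq:S5aerg} by a parity split on~$e$; the arguments are equivalent.
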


\begin{proof}
By geometric series expansion, we see that the coefficient of $q^n$ in 
\eqref{eq:S5a} is equal to the number of possibilities to write
\begin{equation} \label{eq:k1k2a2} 
n=(2k_1+1)2^{2s-2}+(k_2+2)2^{2t-1}
\end{equation}
for some $s$ and $t$ with
$1\le s<t$ and $k_1,k_2\ge0$.
Clearly again, we need $a$ to be even in order that
the number of these possibilities be non-zero.
Given that $a=2s-2$,
we just have to count the number of possible pairs $(t,k_2)$ in  
\eqref{eq:k1k2a2}, since the appropriate $k_1$ can certainly be found.
If we fix $t$, the number of possible $k_2$'s is
$$
\fl{\frac {n-2^{2t}} {2^{2t-1}}+1}
=
\fl{\frac {n} {2^{2t-1}}}-1.
$$
This needs to be summed over all $t$
with $\frac {1} {2}(a+2)=s<t\le \frac {1} {2}(e+1)$.
We obtain
\begin{align*} 
\sum_{t=s+1}^{\fl{\frac {1} {2}(e+1)}}
\left(\fl{\frac {n} {2^{2t-1}}}-1\right)
&\equiv
\sum_{t=\frac {1} {2}(a+4)}^{\fl{\frac {1} {2}(e+1)}}
\big\lfloor
n_a\cdot 2^{a-2t+1}+\dots+n_{2t-2}\cdot 2^{-1}\\
&\kern3cm
+(n_{2t-1}-1)+n_{2t}\cdot 2+\dots +n_e\cdot 2^{e-2t+1}
\big\rfloor\\
&
\equiv
\sum_{t=\frac {1} {2}(a+4)}^{\fl{\frac {1} {2}(e+1)}}
n_{2t-1}
-\fl{\tfrac {1} {2}(e-a-1)}
\quad \quad (\text{mod }2).
\qedhere
\end{align*}
\end{proof}

We are finally in the position to state and prove our main result.
It expresses the congruence class of $w(n)$ modulo~8 --- and thus,
by \eqref{eq:wu}, the congruence class of the unique path partition
number~$u(n)$ modulo~16 --- in terms of the binary digits of~$n$.
We point out that the assertion \eqref{eq:w1} already appeared
in \cite[Prop.~4.5]{BeOSAA}.

\begin{theorem} \label{thm:w}
Let $n\ge2$, and write 
$n=\sum_{i=a}^e n_i\cdot 2^i$ as in Lemma~\ref{lem:S1}.
Then, if $a=e$ {\em(}i.e., if $n$ is a power of~$2${\em)}, 
the number $w(n)$ is congruent to 
\begin{equation} \label{eq:w1} 
2\fl{a/2}+1\quad \text{\em(mod~8)},
\end{equation}
while it is congruent to
\begin{multline}
2+2\fl{a/2}+2\chi(a\text{ \em even})(1-2n_{a+1})(e-a-1)
+4\chi(a\text{ \em odd})(e-a)\\
+4\chi(a\text{ \em even})\Bigg(e\sum_{i=a+2}^{e-\chi(e\text{ \em even})}
n_{i}
+a\cdot n_{a+2}
+\sum_{t=\frac {1} {2}(a+4)}^{\fl{\frac {1} {2}(e+1)}}
n_{2t-1}
\Bigg)\quad \text{\em(mod 8)}
\label{eq:w2}
\end{multline}
otherwise.
\end{theorem}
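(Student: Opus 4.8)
The plan is to read off the coefficient of $q^n$ from both sides of \eqref{eq:Wcong1} after decomposing its right-hand side into the individual sums \eqref{eq:S1}, \eqref{eq:S2}, \eqref{eq:S3}, \eqref{eq:S5}, \eqref{eq:S4}, \eqref{eq:S5a} that are evaluated in Lemmas \ref{lem:S1}--\ref{lem:S5a}. The term $\sum_{i\ge1}q^{2^i}$ contributes $1$ to the coefficient of $q^n$ precisely when $n$ is a power of $2$, and $2q^3/(1-q)=2\sum_{m\ge3}q^m$ contributes $2$ precisely when $n\ge3$. In the sum carrying the prefactor $2$, distributing the factor $1-q^{2^{2j-1}}$ in the numerator exhibits it as $2$ times \eqref{eq:S1} plus $2$ times \eqref{eq:S2} minus $2$ times \eqref{eq:S3}. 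The one step needing care is the sum over $1\le s<t$ carrying the prefactor $4$: here the bracket $q^{2^{2t-1}}(1+q^{2^{2t-2}})+\sum_{\ell\ge 2t}q^{2^\ell}$ equals $\sum_{\ell\ge 2t-1}q^{2^\ell}+q^{2^{2t-2}+2^{2t-1}}$, and since $2^{2t-2}+(2^{2t-2}+2^{2t-1})=2^{2t}$, multiplying by $q^{2^{2s-2}+2^{2t-2}}/\bigl((1-q^{2^{2s-1}})(1-q^{2^{2t-1}})\bigr)$ reproduces precisely the summands of \eqref{eq:S4} and \eqref{eq:S5a}. Finally the last sum in \eqref{eq:Wcong1} is $4$ times \eqref{eq:S5}. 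Thus, modulo $8$, the coefficient of $q^n$ in $\sum_{n\ge2}w(n)q^n$ is the sum of the coefficients furnished by Lemmas \ref{lem:S1}--\ref{lem:S5a}, each weighted by its stated power of $2$.

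I would then substitute the lemma evaluations and simplify, treating separately whether or not $n$ is a power of $2$. If $n=2^a$, then the coefficients of $q^n$ in \eqref{eq:S2}, \eqref{eq:S3}, \eqref{eq:S4}, \eqref{eq:S5a}, \eqref{eq:S5} all vanish --- for \eqref{eq:S4} and \eqref{eq:S5a} by a short direct count of the type in their proofs, since here the relevant index ranges are empty (the closed forms \eqref{eq:S4erg} and \eqref{eq:S5aerg} should \emph{not} be used in this degenerate case, where the convention \eqref{eq:SUM} makes them disagree with the true value). Hence the coefficient of $q^n$ is $1+2\chi(a\ge2)+2\max\{\fl{a/2}-1,0\}$, and this equals $2\fl{a/2}+1$: for $a\ge2$ one has $\max\{\fl{a/2}-1,0\}=\fl{a/2}-1$ and $2+2(\fl{a/2}-1)=2\fl{a/2}$, while for $a=1$ both sides equal $1$. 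This proves \eqref{eq:w1}.

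If $n$ is not a power of $2$, then $n\ge3$, so the first two terms contribute $0$ and $2$, and Lemma \ref{lem:S1} contributes $2\fl{a/2}$. Since $2j-1=a+1$ whenever $a=2j-2$ and since $e>a$, Lemmas \ref{lem:S2} and \ref{lem:S3} combine to $2\chi(a\text{ even})(1-2n_{a+1})(e-a-1)$ (explicitly $+2(e-a-1)$ if $a$ is even with $n_{a+1}=0$, and $-2(e-a-1)$ if $a$ is even with $n_{a+1}=1$). For the degree-$4$ contribution I only need the evaluations modulo $2$, because $4x\equiv 4(x\bmod 2)\pmod 8$: if $a$ is odd, only \eqref{eq:S5} survives, contributing $4(e-a)$; if $a$ is even, \eqref{eq:S5} vanishes and the sum of \eqref{eq:S4erg} and \eqref{eq:S5aerg} reduces modulo $2$ to $e\sum_{i=a+2}^{e-\chi(e\text{ even})}n_i+a\,n_{a+2}+\sum_{t=(a+4)/2}^{\fl{(e+1)/2}}n_{2t-1}$, using that $-a\,n_{a+2}\equiv a\,n_{a+2}$ and that the two copies of $\fl{\tfrac12(e-a-1)}$ sum to an even integer. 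Collecting all the pieces produces exactly \eqref{eq:w2}.

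The only genuinely delicate point is the behaviour of the summation convention \eqref{eq:SUM} in \eqref{eq:S4erg} and \eqref{eq:S5aerg} when the index ranges become empty or reversed: this is what forces the powers of $2$ to be handled as a separate case, and it also explains why the terms $\fl{\tfrac12(e-a-1)}$ do not appear in \eqref{eq:w2} --- in the final assembly they occur twice and cancel modulo $2$. Everything else is routine extraction of coefficients and bookkeeping of powers of $2$.
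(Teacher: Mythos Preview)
Your proof is correct and follows essentially the same route as the paper: decompose the right-hand side of \eqref{eq:Wcong1} into the six building blocks \eqref{eq:S1}--\eqref{eq:S5a}, apply Lemmas~\ref{lem:S1}--\ref{lem:S5a}, and assemble the pieces separately for $n=2^a$ and for $n$ not a power of~$2$. You are in fact slightly more careful than the paper in making the splitting of the $4\sum_{1\le s<t}$ term into \eqref{eq:S4} and \eqref{eq:S5a} explicit, in recording the correct minus sign on the Lemma~\ref{lem:S3} contribution, and in flagging that the closed forms \eqref{eq:S4erg} and \eqref{eq:S5aerg} should be bypassed in the degenerate case $a=e$.
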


\begin{proof}
Let first $n=2^a$. We must then read the coefficient of $q^n$ on the
right-hand side of \eqref{eq:Wcong1} and reduce the result modulo~8.
Non-zero contributions come from the very first sum, from the series
$2q^3/(1-q^2)$, and from the series which is discussed in Lemma~\ref{lem:S1}.
Altogether, we obtain
$$1+2\chi(a\ge2)+2\max\{\fl{a/2}-1,0\},$$
which can be simplified to \eqref{eq:w1}. 

Now let $n$ be different from a power of $2$. 
The non-zero contributions when reading the coefficient of $q^n$ on
the right-hand side of \eqref{eq:Wcong1} come again from the series
$2q^3/(1-q^2)$, and from the series discussed in
Lemmas~\ref{lem:S1}--\ref{lem:S5a}. These contributions add up to
\begin{multline*}
2\chi(n\ge3)+2\fl{a/2}+2\chi(a\text{ even, $n_{a+1}=0$})(e-a-1)\\
+2\chi(a\text{ even, $n_{a+1}=1$})(e-a-1)
+4\chi(a\text{ odd})(e-a)\\
+4\chi(a\text{ even})\Bigg(e\sum_{i=a+2}^{e-\chi(e\text{ even})}
n_{i}
-a\cdot n_{a+2}+\fl{\tfrac {1} {2}(e-a-1)}
+\sum_{t=\frac {1} {2}(a+4)}^{\fl{\frac {1} {2}(e+1)}}
n_{2t-1}+\fl{\tfrac {1} {2}(e-a-1)}
\Bigg).\kern-4pt
\end{multline*}
This expression can be simplified to result in \eqref{eq:w2}.
\end{proof}

It is clear that, in the same way, 
one could also derive a result for $w(n)$ modulo~16, 32, \dots,
albeit at the cost of considerably more work.

\end{document}